\documentclass[12pt]{amsart}

\usepackage{amssymb}
\usepackage{amsmath}
\usepackage{amsfonts}
\usepackage[utf8]{inputenc}
\usepackage{amsthm}
\usepackage{tikz-cd}

\newtheorem{thm}{Theorem}[section]
\newtheorem{lemma}[thm]{Lemma}
\newtheorem{df}[thm]{Definition}
\newtheorem{prop}[thm]{Proposition}
\newtheorem{ex}[thm]{Example}
\newtheorem{cor}[thm]{Corollary}

\theoremstyle{remark}
\newtheorem{remark}[thm]{Remark}

\newcommand{\RR}{\mathcal{R}}
\newcommand{\Z}{\mathcal{Z}}

\newcommand{\R}{\mathbb{R}}
\newcommand{\N}{\mathbb{N}}

\newcommand{\G}{\mathcal{G}}

\title{Sums of even powers of k-regulous functions}
\author{Juliusz Banecki and Tomasz  Kowalczyk}
\date{}

\begin{document}

\keywords{Pythagoras number, k-regulous function, sums of even powers.}
\subjclass[2020]{Primary: 26C15, 14P99}
\maketitle

\begin{abstract}

We provide an example of a nonnegative $k$-regulous function on $\R^n$ for $k\geq 1$ and $n \geq 2$ which cannot be written as a sum of squares of $k$-regulous functions. We then obtain lower bounds for Pythagoras numbers  $p_{2d}(\RR^k(\R^n))$ of $k$-regulous functions on $\R^n$ for $k\geq 1$ and $n\geq 2$. We also prove that the second Pythagoras number of the ring of $0$-regulous functions $\RR^0(X)$ on an irreducible $0$-regulous affine variety $X$ is finite and bounded from above by $2^{\dim X}$.

\end{abstract}
\section*{Introduction}

Let $f$ be a real-valued nonnegative function on a topological space $X$. It is then natural to ask two questions. Can $f$ be written as a sum of squares of functions on $X$ with the same regularity as $f$? And if so, how many summands are needed? In this paper we deal with two instances of this problem. In the first one, $X$ is an irreducible $0$-regulous affine variety and $f$ is a $0$-regulous function. The second case deals with $k$-regulous functions on $\R^n$ for $k>0$.

It was known since Hilbert that not every nonnegative polynomial can be written as a sum of squares of polynomials, however the first explicit example was discovered almost 80 years later by Motzkin (see \cite{Motzkin}). By a famous result of Artin \cite{Artin}, every nonnegative polynomial can be written as a sum of squares of rational functions. Assume that for a nonnegative polynomial $p$ on $\R^n$ in every expression
\begin{equation}\label{eq1}
 p = \sum \left(\frac{f_i}{g_i}\right)^2
\end{equation}
where $f_i, g_i$ are polynomials, there exists a point for which all denominators vanish. In this case, such a point is called a ``bad point". It is known that polynomials with ``bad points" exists (cf. \cite{Delzell}), and therefore it is natural to ask what regularity conditions can be imposed on the rational functions in (\ref{eq1}). We answer this question in the case where $p$ is a positive definite homogeneous polynomial.

 


Let $A$ be a commutative ring with identity.
\begin{df}
We define the $2d$-th Pythagoras number of $A$,  $p_{2d}(A)$ to be the least positive integer $g$ such that any sum of $2d$-th powers of elements of $A$ can be expressed as a sum of $g$ $2d$-th powers of elements of $A$. If such number does not exist, we put $p_{2d}(A)=\infty$.
\end{df}


These invariants were studied before, both in the case of a field of rational functions in one variable \cite{CLPR, Schmid}, and in the case of real holomorphy rings \cite{Becker}. In this paper we will study the Pythagoras numbers and existence of sums of even powers representations in the ring of $k$-regulous functions, the definition of which we  recall below in Section 1. For a more thorough exposition see \cite{4xfr, KN}.

 In general, computing the second Pythagoras number of a ring $A$ is a very difficult task. It is hard to even determine, whether this number is finite or not.
 Take, for example, the field $\R(x_1,x_2,\dots, x_n)$, where only some bounds for the value of the second Pythagoras number $p_2(\R(x_1,\dots, x_n))$ are known. Pfister \cite{Pfister}  was first to show that this number is finite and bounded from above by $2^n$, while the lower bound, which is $n+2$, is attributed to Cassels, Ellison and Pfister (see \cite{grimm} or \cite[Lemma 8.2]{CLDR}). In particular $p_2(\R(x_1,x_2))=4.$ The situation is different when we consider polynomials. It was proven in \cite{CLDR} that  $p_2(\R[x_1, \dots, x_n])=\infty$, for $n\geq 2$. There are also results concerning sums of squares in excellent rings (see \cite{FRS}).

The structure of the paper is as follows. Section 1 contains preliminary results concerning $k$-regulous functions. In Section 2 we discuss the second Pythagoras number $p_2(\RR^0(X))$ of the ring of $0$-regulous functions on an irreducible $0$-regulous affine variety, and provide some upper bounds. In Section 3 we consider $k$-regulous functions on $\R^n$ for $k>0$. An example of a nonnegative $k$-regulous function which cannot be written as a sum of squares is given. Also, some lower bounds for the Pythagoras numbers $p_{2d}(\RR^k(\R^n))$ are established. The last section deals with the problem of representing a quotient of two positive definite forms as a sum of even powers of $k$-regulous functions, for some ~$k$.

\section{Preliminaries}
We start with some preliminary definitions.

\begin{df}
Let $n$ be a positive integer and $k$ be a nonnegative integer. We say that a continuous function $f: \R^n \rightarrow \R$ is $k$-regulous on $\R^n$ if $f$ is of class $\mathcal{C}^k$ and $f$ is a rational function, i.e. there exists a non-empty Zariski open subset $U\subset \R^n$ such that $f|_U$ is regular.
\end{df}

The class of $k$-regulous functions determine a topology on $\R^n$, such that the zero sets of $k$-regulous functions are precisely the closed sets. We call such a topology a $k$-regulous topology. Surprisingly, this topology does not depend on $k$ \cite[Corollaire 6.5]{4xfr}. In particular, $k$-regulous closed subsets of $\R^n$ coincide with algebraically constructible Euclidean closed subsets of $\R^n$.

\begin{df}\label{defin k-reg}
For a $k$-regulous closed subset $X\subset \R^n$ we define the ring of $k$-regulous functions on $X$ as the
quotient
$$\RR^k(X):=\RR^k(\R^n)/\mathcal{I}(X)$$
where $\mathcal{I}(X)$ is the ideal of $k$-regulous functions vanishing on $X$.
\end{df}
Throughout the paper, the term affine $k$-regulous variety  will mean $k$-regulous closed irreducible subset of $\R^n$ for some $n$ (see \cite{4xfr} for more details on the $k$-regulous topology).

We start with some simple results that will be used frequently below.
By $\Z(f)$ we mean the zero set of $f$.
\begin{prop}
Let $X \subset \R^n$ be a $k$-regulous affine variety and $f \in \RR^k(X)$ be a $k$-regulous function on $X$. Then the set $U:= X\setminus\Z(f) $ is isomorphic with a $k$-regulous affine variety.

\begin{proof}
By Definition \ref{defin k-reg} we can find a $k$-regulous function $F \in \RR^k(\R^n)$ such that $F|_X=f$. By noetherianity of the $k$-regulous topology (cf. \cite[Theorem 4.3]{4xfr}), there is a $k$-regulous function $G \in \RR^k(\R^n)$ such that $\Z(G)=X$. Define a $k$-regulous set $$V:=\{(x,y) \in \R^n \times \R \ | \ (F(x)y-1)^2+G(x)^2 =0 \}.$$ Clearly, the sets $U$ and $V$ are isomorphic in the category of $k$-regulous affine varieties.

\end{proof}

\end{prop}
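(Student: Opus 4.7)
The plan is to realize $U$ as the zero set of a single $k$-regulous equation in one higher-dimensional affine space, via a Rabinowitsch-type graph trick. Concretely, the output should be a subset $V \subset \R^{n+1}$ that is a $k$-regulous affine variety, together with mutually inverse $k$-regulous morphisms between $U$ and $V$.

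First I would lift $f$ to a global $k$-regulous function $F \in \RR^k(\R^n)$ using Definition \ref{defin k-reg}, and then invoke Noetherianity of the $k$-regulous topology \cite[Theorem 4.3]{4xfr} to write $X$ as the zero set of a single $k$-regulous function $G \in \RR^k(\R^n)$: finitely many generators $g_1,\dots,g_m$ of $\mathcal{I}(X)$ can be combined into $G = g_1^2 + \cdots + g_m^2$ since we work over $\R$. With these in hand, set
\[
V := \{(x,y) \in \R^n \times \R \colon (F(x)y-1)^2 + G(x)^2 = 0\},
\]
which is the zero set of a single $k$-regulous function on $\R^{n+1}$, hence a $k$-regulous closed set. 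A point $(x,y)$ lies in $V$ exactly when $G(x)=0$ and $F(x)y=1$, i.e.\ when $x \in X$, $F(x)\neq 0$, and $y=1/F(x)$, so the first-coordinate projection $\pi\colon V \to U$ is a set-theoretic bijection.

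It remains to verify that $\pi$ and its inverse $\sigma\colon U \to V$, $x\mapsto(x,1/F(x))$, are mutually inverse $k$-regulous morphisms, and that $V$ is irreducible as the definition of a $k$-regulous affine variety requires. The map $\pi$ is a restriction of the linear projection $\R^{n+1}\to\R^n$, so it is trivially $k$-regulous. For $\sigma$, the key observation is that $F$ is $k$-regulous and nonvanishing on $U$, so $1/F$ is of class $\mathcal{C}^k$ and rational on $U$, making the extra coordinate a $k$-regulous function. Irreducibility of $V$ follows because $\pi$ is a homeomorphism between $V$ and the nonempty open subset $U$ of the irreducible variety $X$, and openness preserves irreducibility. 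The entire argument is a clean application of the Rabinowitsch trick; the only mild subtlety to watch is that $\mathcal{C}^k$ regularity passes through inversion of $F$ on its nonvanishing locus, which is immediate.
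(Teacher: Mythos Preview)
Your proposal is correct and follows exactly the paper's approach: lift $f$ and $X$ to $F,G\in\RR^k(\R^n)$ and apply the Rabinowitsch trick $V=\{(F(x)y-1)^2+G(x)^2=0\}$. You simply supply the details (bijectivity, $k$-regularity of $1/F$, irreducibility of $V$) that the paper compresses into the word ``Clearly''.
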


\begin{prop}\label{localization}
Let $X\subset \R^n$ be an affine $k$-regulous variety and $f \in \RR^k(X)$ be a $k$-regulous function on $X$. Then the restriction morphism induces an isomorphism between the localization $\RR^k(X)_f$ and $\RR^k(U)$ where $U:=X\setminus \Z(f)$ is the complement of the zero set of $f$.

\begin{proof}

Let $\phi:\RR^k(X)\rightarrow \RR^k(U)$ be the restriction homomorphism. As $f$ does not vanish on $U$, there is an induced homomorphism
$$\phi_f:\RR^k(X)_f\rightarrow \RR^k(U).$$ We will prove that $\phi_f$ is an isomorphism. Denote by $\pi:\RR^k(\R^n)\rightarrow \RR^k(X)$ the natural projection. If $X=\R^n$, then this is precisely \cite[Proposition 5.6]{4xfr}. Injectivity of $\phi_f$ follows from the irreducibility of $X$.

To show surjectivity, take any $g\in \RR^k(U)$. From the definition of induced topology, there exists a $k$-regulous open subset $V\subset \R^n$ such that $V\cap X=U$. By \cite[Proposition 5.37]{4xfr} we may find a $k$-regulous function $G \in \RR^k(V)$ such that $G|_U=g$. Similarly, there exists a $k$-regulous function $F\in \RR^k(\R^n)$ such that $F|_X=f$. The next step is to modify the functions $G$ and $F$ in order to apply the Łojasiewicz inequality \cite[Lemme 5.1]{4xfr}. Since the $k$-regulous topology is Noetherian, there exists a $k$-regulous function $H \in \RR^k(\R^n)$ such that $\Z(H)=X$. Define modifications of $F$ and $G$ as follows:
$\widetilde{F}=F+H^2$ and $\widetilde{G}=G+H^2|_U$. Clearly, $\widetilde{F}|_X=f$ and $\widetilde{G}|_U=g$. These new functions satisfy the assumptions of the Łojasiewicz inequality, hence, there exists a positive integer $N$, such that $\widetilde{F}^N\widetilde{\G}\in \RR^k(\R^n)$. Let now $\pi(\widetilde{F}^N\widetilde{\G})=h \in \RR^k(X)$, which is a $k$-regulous function on $X$. The function $h$ restricted to the set $U$ is equal to $f^Ng$, therefore $g$ can be written as a fraction $\frac{h}{f^N}$, as $f$ is a unit on $U$. Since $g$ was arbitrary, this finishes the proof.

\end{proof}
\end{prop}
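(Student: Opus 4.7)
The plan is to produce $\phi_f$ from the universal property of localization, dispatch injectivity with irreducibility, and then attack the main difficulty---surjectivity---through a lift-extend-multiply argument based on a Łojasiewicz-type inequality for $k$-regulous functions. Restriction yields a ring homomorphism $\phi : \RR^k(X) \to \RR^k(U)$; since $f$ is pointwise nonzero on $U = X \setminus \Z(f)$, $\phi(f)$ is a unit, so $\phi$ factors through $\phi_f : \RR^k(X)_f \to \RR^k(U)$. Injectivity is immediate: if $\phi_f(h/f^N) = 0$ then $h$ vanishes on $U$, so $\Z(h)$ is a $k$-regulous closed subset of the irreducible space $X$ containing the nonempty $k$-regulous open set $U$ (assuming $f \not\equiv 0$, the degenerate case being trivial), forcing $\Z(h) = X$ and hence $h = 0$ in $\RR^k(X)$.

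For surjectivity I would proceed as follows. Given $g \in \RR^k(U)$, use the fact that the $k$-regulous topology on $X$ is induced from $\R^n$ to write $U = V \cap X$ for some $k$-regulous open $V \subset \R^n$. Invoke the extension theorem for $k$-regulous functions from \cite{4xfr} to extend $g$ to $G \in \RR^k(V)$, and separately choose a lift $F \in \RR^k(\R^n)$ of $f$. If $\Z(F)$ happened to equal $\R^n \setminus V$, the Łojasiewicz inequality for $k$-regulous functions would directly produce an integer $N$ and a function $\tilde h \in \RR^k(\R^n)$ with $\tilde h|_V = F^N G$; its image $h$ in $\RR^k(X)$ would then restrict on $U$ to $f^N g$, exhibiting $g$ as $\phi_f(h/f^N)$.

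The main obstacle is that arbitrary lifts $F, G$ need not satisfy the Łojasiewicz hypotheses---namely that $G$ be defined precisely on the complement of $\Z(F)$. This is where Noetherianity of the $k$-regulous topology enters: one selects $H \in \RR^k(\R^n)$ with $\Z(H) = X$ and replaces $F$ and $G$ by modifications such as $F + H^2$ that still restrict to $f$ on $X$ and to $g$ on $U$ respectively, but whose zero loci are now suitably aligned with $\R^n \setminus V$, so that the inequality becomes applicable. The remaining steps---applying Łojasiewicz, restricting to $X$, and identifying $f^N g$ on $U$---are routine bookkeeping, so the only delicate point is arranging the zero-set containments during the modification.
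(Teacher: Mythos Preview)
Your proposal is correct and follows essentially the same route as the paper: construct $\phi_f$ via the universal property, deduce injectivity from irreducibility of $X$, and for surjectivity lift $g$ to some $G\in\RR^k(V)$ and $f$ to $F\in\RR^k(\R^n)$, then modify both using $H$ with $\Z(H)=X$ (the paper takes $\widetilde F=F+H^2$ and $\widetilde G=G+H^2$) so that the \L{}ojasiewicz lemma applies to produce $\widetilde F^N\widetilde G\in\RR^k(\R^n)$, whose image in $\RR^k(X)$ restricts to $f^Ng$ on $U$. The paper, like you, leaves the verification that the modified pair satisfies the \L{}ojasiewicz hypotheses as the one point requiring care.
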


Consequently, the ring $\RR^k(U)$ is a ring of $k$-regulous functions on an affine $k$-regulous variety. Below, we give some examples of the $k$-regulous sets.
\begin{ex}
Consider the real algebraic set $X=\Z(y^2-x^{3+k}+x^2) \subset \R^2$. $X$ is then reducible as a $k$-regulous set. Indeed, there is a decomposition $X=Y\cup W$, where $Y$ is the origin, and $W$ is the union of the 1-dimensional connected components with respect to the Euclidean topology. $W$ is then a zero set of a $k$-regulous function 
$f= 1 - \frac{x^{3+k}}{x^2+y^2}$. Additionally, depending on the parity of $k$, $W$ consists of either one or two curves
\end{ex}

\begin{ex}
Take the real algebraic set $X=\Z(zx^2-y^2)\subset \R^3$, known as the Whitney umbrella. $X$ is an irreducible real algebraic set as well as an irreducible $k$-regulous set (cf. \cite[Exemples 6.12]{4xfr}). Note that $X$ has dimension 2, and has a 1-dimensional "handle" sticking out of its 2-dimensional part.

\end{ex}

\section{Sums of squares of $0$-regulous functions}

In this section we are mainly interested in $0$-regulous functions on $0$-regulous affine varieties. Some results, however, do hold in the case $k>0$. Let $X$ be an affine $k$-regulous variety of dimension $n$.

We start by applying the results of \cite{Mahe} to show the following:

\begin{thm}\label{Mahe}
Any strictly positive $k$-regulous function on $X$ can be written as a sum of at most $2^n$ squares of $k$-regulous functions.
\end{thm}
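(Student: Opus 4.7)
The plan is to deduce Theorem \ref{Mahe} as a direct application of \cite[Theorem 7.3]{Mahe}. Mahé's result asserts that in a ring of regular functions $A$ of appropriate Krull dimension $n$, every totally positive element can be written as a sum of $2^n$ squares. So the strategy is to verify the three input conditions for $A = \RR^k(X)$: (i) $A$ is a ring of regular functions in Mahé's sense, (ii) $f$ is totally positive in $A$, and (iii) the relevant dimension invariant of $A$ equals $n = \dim X$.

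Conditions (i) and (ii) are already in place. Indeed, the Remark following the definition of a ring of regular functions records that $\RR^k(\R^n)$ has this property by virtue of the weak Nullstellensatz \cite[Proposition 5.9]{3xfr}, and the property passes to homomorphic images, so $\RR^k(X) = \RR^k(\R^n)/\mathcal{I}(X)$ inherits it. Condition (ii) is Proposition \ref{totally positive} applied to the strictly positive function $f$.

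The step that requires care is (iii), the identification of the dimension invariant appearing in Mahé's bound with the geometric dimension $n$ of $X$. The plan is to argue as follows: the real spectrum $\mathrm{Spec}_r\,\RR^k(X)$ corresponds, via the bijection from Lemma \ref{correspondence}, to the prime cones of $\RR^k(\R^n)$ whose support contains $\mathcal{I}(X)$. Combined with the Artin-Lang property and Noetherianity of the $k$-regulous topology, this yields an order-preserving correspondence between chains of prime cones in $\RR^k(X)$ and chains of irreducible $k$-regulous subsets of $X$. The maximal length of such a chain is $\dim X = n$, so the real dimension (in the sense used by Mahé) of $\RR^k(X)$ equals $n$.

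With the three ingredients assembled, Mahé's theorem immediately yields that $f$ is a sum of at most $2^n$ squares in $\RR^k(X)$, which is the desired statement. The only nontrivial point is the dimension verification described above; everything else is an unpacking of the definitions and previously established results in the excerpt.
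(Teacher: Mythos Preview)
Your proposal is correct and matches the paper's approach: the paper does not give a formal proof of Theorem~\ref{Mahe} at all, introducing it simply with ``As a consequence, we can use the following reformulation of \cite[Theorem~7.3]{Mahe}'' once Proposition~\ref{totally positive} and the remark that $\RR^k(X)$ is a ring of regular functions are in place. Your added discussion of the dimension invariant (iii) is a welcome piece of rigor that the paper leaves implicit.
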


First we need the following two definitions:

\begin{df}We say that a ring $A$ is a ring of regular functions, if for any element $s$ which is a sum of squares in $A$, the element $1+s$ is invertible (cf. \cite[Chapter 7]{Mahe}).
\end{df}
\begin{remark}
By the weak Nullstellensatz \cite[Proposition 5.9]{3xfr}, the ring $\RR^k(\R^n)$ is a ring of regular functions, for any integers $k \geq 0$ and $n\geq 1$. Clearly, any homomorphic image of a ring of regular functions is a ring of regular functions. Thus the ring $\RR^k(X)$ is a ring of regular functions for any $k$-regulous affine variety.

\end{remark}

Let $A$ be a commutative ring with identity. We denote by $\mathrm{Spec}_r(A)$ the real spectrum of $A$ (cf. \cite[Chapter 7]{Bochnak}).

\begin{df}
We say that an element $f \in A$ is totally positive if for every element $\alpha \in \mathrm{Spec}_r(A)$, we have $f(\alpha)>0$.
\end{df}

Now, in order to obtain Theorem \ref{Mahe} as a reformulation of \cite[Theorem 7.3]{Mahe} it suffices to show the following:

\begin{prop}\label{totally positive}
Any strictly positive $k$-regulous function $f$ on $X$ is totally positive in the ring $\RR^k(X)$.
\end{prop}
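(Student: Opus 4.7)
My plan is in three steps: first show that $f$ is a unit in $\RR^k(X)$; then establish density of $X$ in the real spectrum $\mathrm{Spec}_r(\RR^k(X))$; and finally combine these to rule out $f$ taking a negative value anywhere on $\mathrm{Spec}_r(\RR^k(X))$.

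For Step 1, since $f$ is strictly positive on $X$, its zero set $\Z(f)$ is empty, so $U := X \setminus \Z(f) = X$. Proposition \ref{localization} then gives $\RR^k(X)_f \cong \RR^k(U) = \RR^k(X)$, which forces $f$ to be invertible. In particular $f(\alpha) \neq 0$ at every $\alpha \in \mathrm{Spec}_r(\RR^k(X))$, since a unit cannot lie in the support of any prime cone; hence the value of $f$ at $\alpha$ is either strictly positive or strictly negative in the residue ordering.

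For Step 2, points of $X$ inject into $\mathrm{Spec}_r(\RR^k(X))$ via the evaluation cones $\alpha_x := \{g \in \RR^k(X) : g(x) \geq 0\}$, and I would argue that this image is dense: every nonempty basic open set $\{a_1 > 0, \ldots, a_m > 0\}$, with $a_i \in \RR^k(X)$, must meet $X$. This is a manifestation of the Tarski--Seidenberg transfer principle: the $a_i$ are semialgebraic functions on the semialgebraic set $X$, so the satisfiability of a finite conjunction of strict inequalities over some real closed extension of $\R$ implies satisfiability over $\R$ itself.

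For Step 3, the open sets $V^{\pm} := \{\pm f > 0\}$ are disjoint and, because $f$ is a unit, cover $\mathrm{Spec}_r(\RR^k(X))$. Since $f > 0$ on $X$ by hypothesis, one has $X \subset V^+$. If $V^-$ were nonempty, density of $X$ would force $V^- \cap X \neq \emptyset$, producing a point of $X$ at which $f < 0$, a contradiction. Hence $V^- = \emptyset$ and $f$ is totally positive. The delicate point is the density statement in Step~2; if no standard reference is invoked, this is the only place where substantial extra input beyond the preceding propositions is required.
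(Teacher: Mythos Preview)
Your overall route is the same as the paper's: the heart of the matter is an Artin--Lang density statement saying that a sign condition realised on $\mathrm{Spec}_r(\RR^k(X))$ is already realised on $X$. The paper does not argue this for $\RR^k(X)$ directly; instead it lifts via Lemma~\ref{correspondence}, identifying $\mathrm{Spec}_r(\RR^k(X))$ with the cones of $\RR^k(\R^n)$ supported on $\mathcal{I}(X)$, picks $g \in \RR^k(\R^n)$ with $\Z(g)=X$ and a lift $F$ of $f$, and then invokes the regulous Artin--Lang property for $\RR^k(\R^n)$ from \cite[Proposition~5.6]{3xfr} to conclude that $\{g=0,\ F\leq 0\}$ would already have a point in $\R^n$, contradicting $f>0$ on $X$. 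This cited result is precisely the ``substantial extra input'' you anticipate in Step~2.

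Your proposed justification of Step~2 via Tarski--Seidenberg, as written, does not close the gap. Tarski's principle transfers a \emph{semialgebraic} system from a real closed extension down to $\R$; but a prime cone $\alpha$ of $\RR^k(X)$ only hands you an abstract ordered-ring homomorphism $\RR^k(X)\to k(\alpha)$, not a point $x\in X_{k(\alpha)}$ at which the base-changed semialgebraic functions $a_i$ take the values $a_i(\alpha)$. Since $\RR^k(X)$ is not a finitely generated $\R$-algebra and the $a_i$ are not polynomials, identifying that homomorphism with evaluation at a geometric point is exactly the non-trivial content of the regulous Artin--Lang theorem; the paper's detour through $\RR^k(\R^n)$ and Lemma~\ref{correspondence} is what supplies it. Your Step~1 is correct but dispensable: the paper handles $f(\alpha)\leq 0$ in one stroke via the Boolean-combination form of Artin--Lang, so the unit argument is not needed.
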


\begin{proof}
Let $G \in \RR^k(\R^m)$ be a $k$-regulous function such that $\Z(G)=X$. The natural projection $\RR^k(\R^m) \rightarrow \RR^k(X)$ allows us to identify $\mathrm{Spec}_r(X)$ with the set $\{ \alpha \in \mathrm{Spec}_r(\R^m)\ | \ G(\alpha)=0 \}$. Let $F$ be any $k$-regulous function on $\R^m$ so that $F|_X=f$. In order to prove that $f$ is totally positive it is enough to show that the set $\widetilde{S_1}=\{\alpha \in \mathrm{Spec}_r(X) \ | \ f(\alpha)\leq 0\}$ is empty. This in fact is equivalent to emptiness of the set $\widetilde{S_2} = \{ \alpha \in \mathrm{Spec}_r(\R^m) \ | \ G(\alpha)=0, \ F(\alpha)\leq 0 \}$. The result now follows readily from the $k$-regulous version of the Artin-Lang Property \cite[Proposition 5.6]{3xfr}.

\end{proof}



We may now state the main result of this section.
\begin{thm}\label{0=regulous squares}
Let $X$ be an affine $0$-regulous variety of dimension $n$. 
The second Pythagoras number $p_2(\RR^0(X))$ of $0$-regulous functions on $X$ satisfies
$$p_2(\RR^0(X))\leq 2^n.$$

\end{thm}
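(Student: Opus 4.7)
\emph{Proof plan.}
Let $f\in \RR^0(X)$ be a sum of squares; my goal is to express $f$ as a sum of at most $2^n$ squares in $\RR^0(X)$. The plan is to reduce to Theorem \ref{Mahe} by restricting to the Euclidean open set $U:=X\setminus \Z(f)$, on which $f$ is strictly positive, and then extending the resulting representation back across $\Z(f)$. If $f=0$ there is nothing to prove, so I may assume $f\neq 0$; by irreducibility of $X$, $U$ is then a nonempty dense open subset. Since $X$ is of pure dimension $n$ and $U$ is Euclidean-open in $X$, $U$ inherits pure dimension $n$, which is what will let me invoke Theorem \ref{Mahe} with the sharp bound $2^n$.

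By the proposition preceding Proposition \ref{localization}, $U$ is isomorphic to a $0$-regulous affine variety, so applying Theorem \ref{Mahe} to the strictly positive function $f|_U$ yields $g_1,\dots,g_{2^n}\in \RR^0(U)$ with $f|_U=g_1^2+\cdots+g_{2^n}^2$ (padding with zeros if fewer are needed). By Proposition \ref{localization} I may write $g_i=h_i/f^N$ for a common exponent $N$ and elements $h_i\in \RR^0(X)$. I then define $\tilde{g}_i$ on $X$ by $\tilde{g}_i=g_i$ on $U$ and $\tilde{g}_i=0$ on $\Z(f)$; the pointwise bound $g_i^2\leq f$ on $U$ gives $|\tilde{g}_i|\leq\sqrt{f}$, so $\tilde{g}_i$ extends continuously across $\Z(f)$ and is continuous on all of $X$.

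The step I expect to be the main obstacle is showing that $\tilde{g}_i$ actually represents an element of $\RR^0(X)=\RR^0(\R^n)/\mathcal{I}(X)$, i.e.\ that it is the restriction of a $0$-regulous function on $\R^n$. My plan is to mimic the construction in the proof of Proposition \ref{localization}: lift $h_i$ and $f$ to $H_i,F\in\RR^0(\R^n)$, fix a $0$-regulous function $H$ with $\Z(H)=X$, replace $F$ by $\widetilde{F}=F+H^2$ and make a corresponding modification of the numerator, then apply the Łojasiewicz inequality to produce a genuine $0$-regulous function on $\R^n$ whose class in $\RR^0(X)$ equals $\tilde{g}_i$. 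Granting this, $\sum_{i=1}^{2^n}\tilde{g}_i^2$ and $f$ are both continuous on $X$ and agree on the dense subset $U$, hence agree on all of $X$, yielding the desired representation of $f$ as a sum of $2^n$ squares in $\RR^0(X)$.
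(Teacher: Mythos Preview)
Your approach is essentially identical to the paper's: restrict $f$ to $U=X\setminus\Z(f)$, apply Theorem~\ref{Mahe} there, express each summand as $h_i/f^N$ via Proposition~\ref{localization}, and then extend continuously back across $\Z(f)$ (you do this via the bound $(h_i/f^N)^2\le f$, the paper via pure dimensionality). The paper dispatches your ``main obstacle'' in a single line---once $h_i/f^N$ extends continuously to $X$ it is declared $0$-regulous on $X$---so your proposed {\L}ojasiewicz/$\widetilde F$ construction is more than the paper itself supplies; otherwise the arguments coincide.
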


\begin{remark}
Theorem 6.1 in \cite{3xfr} says that any nonnegative $0$-regulous function on $\R^n$ is a sum of squares of $0$-regulous functions. The proof below gives more, namely: any nonnegative $0$-regulous function on a $0$-regulous affine variety is a sum of squares of $0$-regulous functions.
\end{remark}
\noindent
\emph{Proof of Theorem \ref{0=regulous squares}}
Let $f \in \RR^0(X)$ be a sum of squares. Denote by $Z=\Z(f)$ the zero set of $f$ and let $U=X\setminus Z$ be its complement. Consider now the restriction of $f$ to the set $U$. Clearly, $f$ is a totally positive element in $\RR^0(U)$. Hence by Theorem \ref{Mahe} we get $f|_U = \sum_{i=1}^{2^n}f_i^2$, with $f_i \in \RR^0(U)$ for any $i=1,2,\dots, 2^n$. By Proposition \ref{localization}, we have an isomorphism $\RR^0(U)\cong \RR^0(X)_f$, and thus we can write $f_i=\frac{g_i}{f^{l_i}}$ for some $g_i \in \RR^0(X)$ and $i=1,2,\dots, 2^n$. Without loss of generality we may assume that $l_1=l_2=\dots=l_{2^n}=N$ for some positive integer $N$.

For each $i=1,2,\dots, 2^n$ define the function
\begin{equation*}
    h_i:=
    \begin{cases}
        \frac{g_i}{f^N} \text{ on } U \\
        0 \text{ on } Z
    \end{cases}
\end{equation*}
A straightforward calculation shows that each function $h_i$ is continuous on $X$ and 
$$f=\sum_{i=1}^{2^n}h_i^2$$
on $X$.
To finish the proof it is enough to show that the functions $h_i$ for $i=1,2,\dots, 2^n$ are $0$-regulous on $X$, but this follows from the lemma below.
\qed

\begin{lemma}
Let $X\subset\R^n$ be an affine 0-regulous variety and $f,g\in\RR^0(X)$ be $0$-regulous functions such that
\begin{equation*}
    Z=\{x\in X:f(x)=0\}\subset\{x\in X:g(x)=0\}.
\end{equation*}
If the function $h:X \rightarrow \R$ defined as
\begin{equation*}
    h:=
    \begin{cases}
        \frac{g}{f} \text{ on } X \backslash Z \\
        0 \text{ otherwise}
    \end{cases}
\end{equation*}
is continuous, then $h \in\RR^0(X)$.
\begin{proof}
First, note that $g$, $f$ and $X$ are semialgebraic and so is $h$. 
Clearly we have $\Z(f)\subset \Z(h)$. Applying the semialgebraic version of the Łojasiewicz inequality \cite[Theorem 2.6.6]{Bochnak} and \cite[Proposition 2.6.8]{Bochnak} we can find positive rational constants $\epsilon,c,p$ such that 
\begin{equation*}
    |h| < c|f|^\epsilon(1+||x||^2)^p 
\end{equation*}
on $X \backslash Z$.
Let now $F,G,E\in\RR^0(\R^n)$ be any $0$-regulous functions satisfying $f=F|_X$,  $g=G|_X$ and $\mathcal{Z}(E)=X$.
Consider the set
\begin{equation*}
    V:=\{x\in\R^n:|G(x)| \geq c|F(x)|^{1+\epsilon}(1+||x||^2)^p \}.
\end{equation*}
Note that $V$ is a closed semialgebraic set and $V\cap X=Z$. Therefore, by applying the Łojasiewicz inequality again, we can find a continuous semialgebraic function $K:V\rightarrow \R$ and a positive integer $m>0$ such that:
\begin{equation*}
   K E^2=F^{2m-1}
\end{equation*}
on $V$.
Finally, consider the function
\begin{equation*}
    H:=\frac{GF^{2m-1}}{F^{2m}+E^2}.
\end{equation*}
We claim that $H\in \RR^0(\R^n)$. It is sufficient to show that it can be continuously extended to its indeterminacy locus, i.e. $Z$. On the set $V$ we have the inequality $|H|\leq |GK|$ which holds near $Z$. On the complement of $V$ we have $|H|\leq |G/F|< c|F|^\epsilon(1+||x||^2)^p$. In both cases $H$ approaches zero at every point of $Z$, so indeed it is continuous after extending by 0 on $Z$, and thus $0$-regulous.
Finally, $HF=G$ on $X$, so $H|_X=h$. This finishes the proof.
\end{proof}
\end{lemma}

\begin{remark}
Reasoning from the proof of Theorem  \ref{0=regulous squares} cannot be applied to the $k$-regulous functions on $\R^n$ for $k\geq 1$ and $n\geq 2$, as there is no guarantee that the extensions of the functions $f_i$ will be of class $\mathcal{C}^k$ .
\end{remark}
\begin{cor}
Since the field $\R(x_1,\dots, x_n)$ is a field of fractions of $\RR^0(\R^n)$ we obtain

$$n+2 \leq p_2(\RR^0(\R^n))\leq 2^n.$$
In particular $p_2(\RR^0(\R^2))=4$.
\end{cor}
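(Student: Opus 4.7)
The corollary compiles two ingredients, so the plan has two independent pieces.

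The upper bound is essentially free: $\R^n$ is itself a $0$-regulous affine variety of pure dimension $n$ (trivially, every Euclidean open set contains a copy of $\R^n$), so Theorem \ref{0=regulous squares} applied to $X=\R^n$ yields $p_2(\RR^0(\R^n)) \leq 2^n$. Nothing more is required here.

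For the lower bound I would prove the general fact that if $F$ is the field of fractions of an integral domain $A$, then $p_2(F) \leq p_2(A)$, and then quote Cassels' inequality $p_2(\R(x_1,\dots,x_n)) \geq n+2$. The auxiliary fact is a standard clearing-of-denominators argument: suppose $g := p_2(A)$ is finite, and take an arbitrary sum of squares $x = \sum_{i=1}^N (p_i/q_i)^2 \in F$. Setting $q := \prod_i q_i$, the element
\[
xq^2 = \sum_{i=1}^N \Bigl(p_i \prod_{j\ne i} q_j\Bigr)^{\!2}
\]
lies in $A$ and is a sum of squares there. By definition of $p_2(A)$ we may rewrite $xq^2 = \sum_{i=1}^g r_i^2$ with $r_i \in A$, and dividing by $q^2$ gives the desired representation $x = \sum_{i=1}^g (r_i/q)^2$ of length $g$ in $F$. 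Applying this to $A = \RR^0(\R^n)$, whose field of fractions is $\R(x_1,\dots,x_n)$ (polynomials are $0$-regulous, and any $0$-regulous function is by definition rational), we conclude $n+2 \leq p_2(\R(x_1,\dots,x_n)) \leq p_2(\RR^0(\R^n))$.

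The last assertion is just arithmetic: for $n = 2$ the two bounds collapse to $4 \leq p_2(\RR^0(\R^2)) \leq 4$. I do not anticipate any real obstacle; the only points worth stating carefully are that $\R^n$ is of pure dimension (to legitimately invoke Theorem \ref{0=regulous squares}) and that $\mathrm{Frac}(\RR^0(\R^n)) = \R(x_1,\dots,x_n)$ (to legitimately invoke Cassels).
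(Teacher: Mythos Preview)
Your proposal is correct and follows exactly the approach the paper intends: the corollary is stated without proof precisely because the upper bound is Theorem~\ref{0=regulous squares} specialized to $X=\R^n$, and the lower bound is the standard fraction-field inequality $p_2(\mathrm{Frac}\,A)\le p_2(A)$ combined with the Cassels--Ellison--Pfister bound $p_2(\R(x_1,\dots,x_n))\ge n+2$ cited in the introduction. Your clearing-of-denominators argument and the verification that $\mathrm{Frac}(\RR^0(\R^n))=\R(x_1,\dots,x_n)$ simply make explicit what the paper leaves to the reader.
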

For $n=2$, Theorem \ref{0=regulous squares} and \cite[Theorem 6.1]{3xfr} immediately imply

\begin{cor}
Let $f \in \RR^0(\R^2)$. Then $f$ is nonnegative on $\R^2$ if and only if there exist functions $f_1, f_2, f_3, f_4 \in \RR^0(\R^2)$ such that $f=f_1^2+f_2^2+f_3^2+f_4^2$.
\end{cor}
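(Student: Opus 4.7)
The corollary is essentially a direct combination of two already-available ingredients, so the plan is short. The forward implication is the substantive one; the reverse is trivial.

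For the nontrivial direction, suppose $f \in \RR^0(\R^2)$ is nonnegative on $\R^2$. The first step is to apply \cite[Theorem 6.1]{3xfr}, which guarantees that $f$ can be written as some (a priori unbounded) finite sum of squares of $0$-regulous functions on $\R^2$. In particular, $f$ lies in the cone of sums of squares of $\RR^0(\R^2)$.

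The second step is to invoke Theorem \ref{0=regulous squares} with $X = \R^2$. Since $\R^2$ is an affine $0$-regulous variety of pure dimension $n = 2$, that theorem gives $p_2(\RR^0(\R^2)) \leq 2^2 = 4$. Combined with the previous step, this means any sum of squares representation of $f$ can be compressed to one with at most $4$ summands, yielding $f_1, f_2, f_3, f_4 \in \RR^0(\R^2)$ with $f = f_1^2 + f_2^2 + f_3^2 + f_4^2$.

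For the converse direction, any expression $f = f_1^2 + f_2^2 + f_3^2 + f_4^2$ with $f_i \in \RR^0(\R^2)$ is automatically pointwise nonnegative on $\R^2$, so there is nothing to prove. No genuine obstacle is expected: the entire content has been prepared by Theorem \ref{0=regulous squares} and the referenced representation theorem, and the role of this corollary is simply to combine them in the concrete case $n=2$ where the bound $2^n = 4$ becomes sharp (as observed in the preceding corollary, where the Cassels lower bound $n+2 = 4$ already forces equality for the field of fractions).
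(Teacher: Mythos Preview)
Your proof is correct and follows exactly the paper's approach: the corollary is stated as an immediate consequence of Theorem \ref{0=regulous squares} (giving $p_2(\RR^0(\R^2))\leq 4$) together with \cite[Theorem 6.1]{3xfr} (giving that every nonnegative $0$-regulous function is a sum of squares), which is precisely what you do.
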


\section{Sums of even powers of $k$-regulous functions}

In this section, we study nonnegative elements and sums of even powers in the rings of $k$-regulous functions on $\R^n$ for $k\geq 1$ and $n\geq 2$. We provide an example of a nonnegative $k$-regulous function which is not a sum of squares, in contrast with the case $k=0$. Also, lower bounds for $2d$-th Pythagoras numbers are given. 
\subsection{Nonnegative functions and the set of bad points.}

Let $A$ be commutative ring with identity. We say that an element $a \in A$ is positive semidefinite (psd) if for every $\alpha \in \mathrm{Spec}_r A$ we have $a(\alpha)\geq 0$. By the $k$-regulous version of Artin-Lang property \cite[Proposition 5.6]{3xfr}, any nonnegative $k$-regulous function on $\R^n$ is psd in the ring $\RR^k(\R^n)$. In real algebraic geometry one can consider the following problem: is every psd element a sum of squares (sos) (see \cite{FRS} for results on excellent rings) in a given ring? Below, we prove the existence of so-called ``bad points" for $k$-regulous functions on $\R^n$. This proves that for $\RR^k(\R^n)$  we have psd $\neq$ sos, provided $k\geq1$ and $n\geq 2$.

\indent
Recall the following:

\begin{df}
Let $f$ be a real-valued continuous function on $\R^n$ and $k$ be a positive integer. We say that $f$ is homogeneous of degree $k$ if $f(\alpha x)=\alpha^k f(x)$ for all nonzero real numbers $\alpha$ and any $x \in \R^n$.
\end{df}



We state the main theorem of this section.

\begin{thm}\label{Sos l}
Let $f$ be a $k$-regulous function on $\R^n$ which is homogeneous of degree $2kd$. If $f$ is a sum of $l$ $2d$-th powers of $k$-regulous functions, then $f$ is a sum of $ l$ $2d$-th powers of polynomials. In particular, $f$ is a polynomial.
\end{thm}

\begin{proof}
We have a presentation $f=\sum_{i=1}^l f_i^{2d}$, for some $k$-regulous functions $f_i \in \RR^k(\R^n)$. Clearly, order of $f_i$ at the origin is necessarily at least $k$, for $i=1,2,\dots, l$. Thus we can write $f_i=P_i+h_i$ where $P_i$ is the Taylor polynomial of degree $k$ of $f_i$ and $h_i$ is a $k$-regulous function of order at least $k+1$ at the origin (cf. \cite[Lemma 2]{czarnecki}). Consequently,
$$f=\sum_{i=1}^l f_i^{2d}=\sum_{i=1}^l (P_i + h_i)^{2d}.$$
After rearranging terms we obtain
$$f-\sum_{i=1}^l P_i^{2d} = G$$
where $G$ is a $k$-regulous function of order at least $2kd+1$. Consequently, we get a homogeneous function of degree $2kd$, and of order at least $2kd+1$, hence a zero function. In particular, a presentation of $f$ as a sum of $2d$-th powers of $k$-regulous functions implies a presentation of $f$ as a sum of $2d$-th powers of polynomials. Thus, $f$ is a polynomial.

\end{proof}







It was asked in \cite{3xfr} whether any nonnegative $k$-regulous function on $\R^n$ is a sum of squares of $k$-regulous functions. We can now show that this is not the case.

\begin{thm}

Let $n = 2$ and $k\geq 1$. Take the $k$-regulous function $$f_k=\frac{x^{2+2k}}{x^2+y^2}\in \mathcal{R}^k(\R^2).$$ Consider the set
$$\mathrm{Denom}(f_k):= \{ h\in \mathcal{R}^k(\R^2):h^2f_k=\sum_{i=1}^l g_i^2 , \,    g_i \in \mathcal{R}^k(\R^2)  \}.$$

Let $I_k$ be the ideal generated by $\mathrm{Denom}(f_k)$.
Then the set of ``bad points", $\mathrm{Bad}(f_k):=\Z(I_k)$ is nonempty.
\begin{proof}

Obviously, $x^2 +y^2 \in \mathrm{Denom}(f_k)$, hence $\Z(I_k)\subset \Z(x^2+y^2)$.
Suppose that the set $\Z(I_k)$ is empty. Then, there exists $h\in\mathrm{Denom}(f_k)$ with a nonzero value at the origin. A simple calculation shows that $x^2+y^2+h^2\in\mathrm{Denom}(f_k)$. Since $\mathrm{Denom}(f_k)$ is closed under multiplication by elements of $\mathcal{R}^k(\R^n)$ and the given element is a unit, we get that $1 \in \mathrm{Denom}(f_k)$.
This is an immediate contradiction with Theorem \ref{Sos l} as the function $f_k$ is homogeneous of degree $2k$ and it is not a polynomial.

\end{proof}

\end{thm}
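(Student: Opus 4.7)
The plan is to assume $\Z(I_k) = \emptyset$ and force the contradiction $1 \in \mathrm{Bad}(f_k)$, which via Theorem \ref{Sos l} will be incompatible with $f_k$ failing to be a polynomial. The starting point is the identity
\[
(x^2+y^2)^2 f_k = x^{2k+2}(x^2+y^2) = (x^{k+2})^2 + (x^{k+1}y)^2,
\]
which exhibits $x^2+y^2 \in \mathrm{Bad}(f_k)$ and pins down $\Z(I_k) \subseteq \{(0,0)\}$. I would also record the closure of $\mathrm{Bad}(f_k)$ under multiplication by $\RR^k(\R^2)$: if $h^2 f_k = \sum g_i^2$ and $r \in \RR^k(\R^2)$, then $(rh)^2 f_k = \sum (rg_i)^2$. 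Consequently every element of $I_k$ is itself a finite sum of elements of $\mathrm{Bad}(f_k)$, so the assumption $\Z(I_k) = \emptyset$ produces some $\xi \in I_k$ with $\xi(0,0) \neq 0$, and among its $\mathrm{Bad}$-summands at least one $h$ must satisfy $h(0,0) \neq 0$.

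The key step is to verify that $u := x^2 + y^2 + h^2$ lies in $\mathrm{Bad}(f_k)$. Expanding,
\[
u^2 f_k = (x^2+y^2)^2 f_k + 2(x^2+y^2)\, h^2 f_k + h^2 \cdot h^2 f_k,
\]
each summand is a sum of squares in $\RR^k(\R^2)$: the first by the computation above; the third because $h^2 f_k$ is a sum of squares by the choice of $h$, and multiplying by the square $h^2$ preserves that; the middle because $2(x^2+y^2) = x^2 + x^2 + y^2 + y^2$ is a sum of squares and a product of two sums of squares is again a sum of squares. Since $h(0,0) \neq 0$, $u$ is strictly positive on all of $\R^2$ and hence a unit of $\RR^k(\R^2)$. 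By the multiplication closure above, $1 = u^{-1} \cdot u \in \mathrm{Bad}(f_k)$.

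Finally, $1 \in \mathrm{Bad}(f_k)$ simply says $f_k$ is a sum of squares of $k$-regulous functions. Since $f_k$ is homogeneous of degree $2k$ (numerator degree $2k+2$, denominator degree $2$), Theorem \ref{Sos l} with $d = 1$ forces $f_k$ to be a polynomial; but $x^2 + y^2$ does not divide $x^{2k+2}$, a contradiction. The only nonformal step is the sum-of-squares expansion of $u^2 f_k$, and even there the obstacle is purely bookkeeping: producing the three summands and recognizing each as a product of sum-of-squares factors in $\RR^k(\R^2)$.
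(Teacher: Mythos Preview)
Your proof is correct and follows exactly the paper's approach: reduce to $\Z(I_k)\subseteq\{(0,0)\}$ via $x^2+y^2\in\mathrm{Bad}(f_k)$, extract an $h\in\mathrm{Bad}(f_k)$ nonvanishing at the origin, show $x^2+y^2+h^2\in\mathrm{Bad}(f_k)$ is a unit, conclude $1\in\mathrm{Bad}(f_k)$, and contradict Theorem~\ref{Sos l}. You simply make explicit the steps the paper leaves as ``obviously'' and ``a simple calculation'' (the identity $(x^2+y^2)^2 f_k=(x^{k+2})^2+(x^{k+1}y)^2$, the passage from $I_k$ back to $\mathrm{Bad}(f_k)$, and the sum-of-squares expansion of $u^2 f_k$).
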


The function $f_k$ might as well be considered a function of $n$ variables for any $n \geq 2$. Using the above argument we see that $\mathrm{Bad}(f_k)=\Z(x^2+y^2)$. As a consequence
\begin{thm}
The function $f_k$ is not a sum of squares in $\mathcal{R}^k(\R^n)$ for any $n \geq 2$.
\end{thm}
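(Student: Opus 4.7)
The plan is to argue by contradiction, reducing the statement directly to the nonemptiness of the bad-point set $\Z(I_k)$ established in the previous theorem (as extended to $\R^n$ in the paragraph preceding the statement). The crucial observation is nearly tautological: a representation $f_k = \sum_{i=1}^l g_i^2$ with $g_i \in \RR^k(\R^n)$ is exactly the same data as the witness $h = 1$ for membership of $1$ in $\mathrm{Bad}(f_k)$, since then $1^2 \cdot f_k = \sum_{i=1}^l g_i^2$.

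Given this, the proof is immediate. Assuming toward a contradiction that $f_k$ is a sum of squares in $\RR^k(\R^n)$, we obtain $1 \in \mathrm{Bad}(f_k) \subseteq I_k$, hence $I_k = \RR^k(\R^n)$ and $\Z(I_k) = \emptyset$. This contradicts $\Z(I_k) = \{x^2 + y^2 = 0\} \neq \emptyset$, finishing the proof.

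I do not foresee any substantive obstacle. The only nontrivial ingredient, namely that $\Z(I_k)$ remains nonempty after passing from $\R^2$ to $\R^n$, has already been handled in the text by the observation that the planar argument generalizes verbatim once one notes that $f_k$ is still homogeneous of degree $2k$ as a function on $\R^n$ and still fails to be a polynomial, so that Theorem \ref{Sos l} applies without modification. Everything else is formal manipulation with the definition of $\mathrm{Bad}(f_k)$.
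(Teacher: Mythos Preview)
Your proposal is correct and follows precisely the route indicated by the paper: the paper derives the theorem ``as a consequence'' of $\Z(I_k)=\{x^2+y^2=0\}\neq\emptyset$, and you have spelled out exactly that consequence, namely that a sum-of-squares representation would place $1\in\mathrm{Bad}(f_k)$ and force $\Z(I_k)=\emptyset$.
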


Note that the function $f_k$ is actually $(2k-1)$-regulous.
\begin{remark}
In Dellzell's PhD Thesis \cite{Delzell} it is shown that the set of ``bad points" of a nonnegative polynomial on $\R^n$ has codimension at least 3. The above result shows that the set of ``bad points" of a nonnegative $k$-regulous function on $\R^n$ for $k\geq 1 $ and $n\geq 2$ can have codimension 2. 
\end{remark}


 
 \subsection{Pythagoras numbers of the rings $\RR^k(\R^n)$}
 Consider the vector space $ \R[x_1,x_2,\dots,x_n]_{2kd}$ of homogeneous polynomials of degree $2kd$ in $n$ variables. Elements of this vector space are also called $n$-ary $2kd$-ics. Even though the Pythagoras numbers are usually defined for rings, it makes perfect sense to define them for the above vector space. 
 
 \begin{df}

We define the $2d$-th Pythagoras number of the vector space $ \R[x_1,x_2,\dots,x_n]_{2kd}$, $p_{2d}(n,2kd)$ to be the least positive integer $g$ such that any sum of $2d$-th powers of $n$-ary $k$-ics can be presented as a sum of at most $g$ $2d$-th powers of $n$-ary $k$-ics. If such number does not exist, we put $p_{2d}(n,2kd)=\infty$.
 \end{df}
 
It follows from the Caratheodory Theorem \cite[Proposition 2.3]{Reznick 2} that the following formula holds
$$   p_{2d}(n,2kd)\leq {{n+2kd-1}\choose{n-1}}$$
(see \cite[Section 4]{Reznick 2} for the $p_{2d}(n,2d)$ which is $T(n,2d)$ in Reznick's notation).

\begin{thm}\label{k-reg p2d}
Let $k,d\geq 1$ and $n\geq 2$ be positive integers. Then the $2d$-th Pythagoras number of $\RR^k(\R^n)$ satisfies
$$p_{2d}(n,2kd)\leq p_{2d}(\RR^k(\R^n)).$$
\begin{proof}
Let $f\in \R[x_1,x_2,\dots,x_n]$ be a homogeneous polynomial of degree $2kd$ which can be written as a sum of $p_{2d}(n,2kd)$ $2d$-th powers of real polynomials, but no fewer. Application of Theorem \ref{Sos l} to $f$ finishes the proof.
\end{proof}
\end{thm}
Further properties of the numbers $p_{2d}(n,2kd)$ are yet to be established.
\begin{remark}\label{remark d=1}
Let $d=1$ and $n>2$. It is known that the set $\{p_2(n, 2k) \ | \ k \in \N \}$ is infinite. This is because $p_2(\R[x_1,x_2,\dots, x_n])=\infty$ for $n\geq 2$. In particular for a fixed $n\geq 3$ we have
$$\limsup_{k\rightarrow \infty} p_2(\RR^k(\R^n))=\infty.$$
 Also, since $\R(x_1, x_2, \dots, x_n)$ is the field of fractions of $\RR^k(\R^n)$, we have the following inequality
$$\max (n+2, p_2(n,2k))\leq  p_2(\RR^k(\R^n)).$$

Another important case occurs for $k=1$. By  \cite[Section 4]{Reznick 2} we have a lower bound given by
$${n+d-1\choose{n-1}} \leq p_{2d}(\RR^1(\R^n)).$$

\end{remark}

Also, the results of Grimm \cite{grimm} imply the following
$$ \max (n+1, p_{2d}(n,2kd))\leq  p_{2d}(\RR^k(\R^n)).$$

\begin{remark}

It is worth noting that for $n=2$ and $d=1$ the Theorem \ref{k-reg p2d} gives us nothing since $p_2(2,2k)=2$ for any $k>0$.

However, as a consequence of \cite{Cassels}, the Motzkin polynomial $$M(x,y)=1+x^2y^2(x^2+y^2-3)$$ can be written as a sum of 4 squares of regular functions, but not less, hence $$4 \leq p_2( \RR^k(\R^2))$$
for $k\geq 0$.
\end{remark}
 
 \begin{remark}
Aside from asking whether the numbers $p_{2d}(\RR^k(\R^n))$ are finite for $n\geq 3$, $k\geq 1$ and $d>1$ it is also interesting to ask: does 
 $$ \limsup_{k\rightarrow \infty}p_{2d}(\RR^k(\R^n)))=\infty$$
hold?

 This would easily follow from the equality
 $$ \limsup_{k\rightarrow \infty}p_{2d}(n, 2kd)=\infty.$$
 The last limit is equivalent to $p_{2d}(\R[x_1,\dots, x_{n-1}])=\infty$. The problem of computation of $p_{2d}(\R[x_1,\dots, x_n])$ is listed as a Problem 8 in \cite{CLDR}. To the authors' best knowledge, this problem still remains open.
For $d=1$, the above holds.
 
 \end{remark}

\section{Case of positive definite forms}
In the last section, we deal with the problem of representing a quotient of two positive definite forms as a sum of $2d$-th powers of $k$-regulous functions.
Let us recall that a homogeneous form of degree $2d$ is positive definite if it vanishes only at the origin.
We start with

\begin{lemma}\label{lemma}
Let $q$ be a positive definite form of degree $2d$. Let $x^\alpha$ be a monomial in $x_1,x_2, \dots, x_n$ of degree k, such that $k-2d-1 \geq 0$. Then the function $\frac{x^\alpha}{q}$ is $(k-2d-1)$-regulous.
\begin{proof}
This follows from an easy application of spherical coordinates.
\end{proof}
\end{lemma}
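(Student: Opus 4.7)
The plan is to verify the three defining conditions of being $(k-2d-1)$-regulous for the function $f = x^\alpha/q$: it must be rational, regular on a nonempty Zariski open subset of $\R^n$, and of class $\mathcal{C}^{k-2d-1}$ on all of $\R^n$. Rationality is obvious, and since $q$ is positive definite we have $\Z(q) = \{0\}$, so $f$ is even real-analytic on the Zariski open set $U := \R^n \setminus \{0\}$. The entire content of the statement therefore lies in understanding the behavior of $f$ at the origin.

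Set $m := k - 2d \geq 1$. Passing to spherical coordinates $x = r\omega$ with $r = |x|$ and $\omega \in S^{n-1}$, we have $(tx)^\alpha = t^k x^\alpha$ and $q(tx) = t^{2d} q(x)$ (the latter because $q$ has even degree), so $f(tx) = t^m f(x)$ for all $t \neq 0$. In particular,
$$
f(r\omega) \;=\; r^{m}\, h(\omega), \qquad h(\omega) := \frac{\omega^\alpha}{q(\omega)}.
$$
Positive definiteness of $q$ and compactness of $S^{n-1}$ yield $q(\omega) \geq c > 0$, so $h$ is smooth and bounded on $S^{n-1}$. Hence $|f(x)| \leq C|x|^m \to 0$ as $x \to 0$, and $f$ extends continuously across the origin by setting $f(0) := 0$.

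To upgrade continuity to $\mathcal{C}^{m-1}$ regularity I would exploit that homogeneity is transferred to derivatives: differentiating $f(tx) = t^m f(x)$ in $x$ gives that every partial derivative $\partial^\beta f$ on $U$ is itself homogeneous of degree $m - |\beta|$. A short induction on $|\beta|$ (or a direct chain-rule computation using $\partial r/\partial x_i = \omega_i$ and $\partial\omega_j/\partial x_i = (\delta_{ij} - \omega_i\omega_j)/r$) shows
$$
\partial^\beta f(r\omega) \;=\; r^{\,m - |\beta|}\, h_\beta(\omega)
$$
for some smooth function $h_\beta$ on $S^{n-1}$. For every $|\beta| \leq m - 1$ this forces $|\partial^\beta f(x)| \leq C_\beta |x|^{m - |\beta|} \to 0$, so each such partial extends continuously to the origin by the value $0$.

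The last step, which is where the only genuine subtlety lies, is to verify that these continuous extensions are truly the classical partials of $f$ at the origin, so that $f$ actually belongs to $\mathcal{C}^{m-1}(\R^n)$. I would argue by induction on $j = 0, 1, \ldots, m-1$: assuming $f \in \mathcal{C}^j(\R^n)$ with all partials of order $\leq j$ given by the continuous extensions above, fix $|\beta| = j$ and a coordinate direction $e_i$. Then $\partial^\beta f$ is continuous on $\R^n$ and smooth on $U$, and $\partial_i\partial^\beta f$ extends continuously across the origin. The fundamental theorem of calculus on the segment from $0$ to $te_i$ identifies $\lim_{t\to 0}[\partial^\beta f(te_i) - \partial^\beta f(0)]/t$ with the value of the continuous extension at $0$, so the classical partial exists and equals that extension; ranging over all $|\beta| = j$ and all $i$ gives $f \in \mathcal{C}^{j+1}(\R^n)$, closing the induction at $j = m - 1$. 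The main obstacle is precisely this final identification of the continuous extension with the classical partial derivative; everything else is homogeneity bookkeeping.
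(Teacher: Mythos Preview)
Your proof is correct and is precisely the ``application of spherical coordinates'' that the paper invokes without details: you use the homogeneity $f(r\omega)=r^{m}h(\omega)$ to bound derivatives and then a standard mean-value/FTC argument to identify the continuous extensions with the classical partials at the origin. The only cosmetic point is that the induction step $j\mapsto j+1$ requires $m-(j+1)\ge 1$, so the last step is $j=m-2$, yielding $f\in\mathcal{C}^{m-1}$; your phrase ``closing the induction at $j=m-1$'' should be read as reaching the target class $\mathcal{C}^{m-1}$, not performing a step at $j=m-1$.
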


\begin{thm}\cite[Third Theorem]{Reznick}\label{Reznick}
Let $p$ and $q$ be positive definite forms on $\R^n$. Assume that the rational function $\frac{p}{q}$ is homogeneous of degree $2kd$. Then, it can be written as a sum of $2d$-th powers of rational functions whose common denominator is a product of powers of $q$ and $(x_1^2+x_2^2+\dots + x_n^2)$.
\end{thm}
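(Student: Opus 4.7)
The plan is to reduce the claim to a statement about representing positive definite forms, times a suitable power of $r := x_1^2 + \cdots + x_n^2$, as sums of $2d$th powers of \emph{polynomial} forms, and then to invoke the classical Hilbert--Reznick machinery. First, I would write
$$\frac{p}{q} = \frac{p \, q^{2d-1}}{q^{2d}}$$
and set $P := p \, q^{2d-1}$, which is again a positive definite form. If I can exhibit an integer $N \geq 0$ divisible by $2d$ together with a representation
$$P \cdot r^{N} = \sum_{i=1}^{s} h_i^{2d}, \qquad h_i \in \R[x_1, \ldots, x_n] \text{ forms,}$$
then dividing both sides by $q^{2d} r^{N} = (q \, r^{N/(2d)})^{2d}$ yields
$$\frac{p}{q} = \sum_{i=1}^{s} \left( \frac{h_i}{q \, r^{N/(2d)}} \right)^{2d},$$
and each denominator is visibly a product of powers of $q$ and $r$, as required.

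So the task reduces to producing the representation of $P\cdot r^N$ as a sum of $2d$th powers of forms, for some $N$ (which can then be adjusted to a multiple of $2d$ by multiplying through by $r^{2d}$, itself a sum of $2d$th powers by Hilbert's identity, and redistributing). The starting tool is Hilbert's classical identity: for every positive integer $m$ one has $r^{md} = \sum_j c_j \, \ell_j^{2d}$ with positive reals $c_j$ and real linear forms $\ell_j$. To pass from this identity about $r^{md}$ to a general positive definite $P$, the strategy I would pursue is an interior-point argument on the sphere: the function $P/r^{\deg P / 2}$ is strictly positive and continuous on the unit sphere $S^{n-1}$, while by Hilbert's identity together with averaging over the orthogonal group, the cone of (restrictions of) sums of $2d$th powers of real linear forms contains a full neighborhood of the constant function in the cone of positive continuous functions on $S^{n-1}$. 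Consequently, scaling by an appropriately large power of $r$ drags $P$ into the interior of the cone of sums of $2d$th powers of forms, and one extracts an honest polynomial identity.

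The main obstacle is converting the qualitative density/interior statement on the sphere into an \emph{exact} polynomial identity in $\R[x_1, \ldots, x_n]$, controlling degrees as one goes. One has to verify that the approximation on the sphere can be realized by a form of the correct degree (namely $\deg P + N$), and that the interior-point argument produces an equality rather than merely a uniform bound; this requires combining the compactness of $S^{n-1}$ with a careful analysis of the graded pieces of the cone of sums of $2d$th powers. This is precisely the technical core of Reznick's ``third theorem'' and is the step which genuinely goes beyond Hilbert's identity.
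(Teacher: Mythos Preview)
The paper does not prove this theorem at all: it is stated with the citation \cite[Third Theorem]{Reznick} and used as a black box, so there is no ``paper's own proof'' to compare your proposal against. The authors simply import Reznick's result and then combine it with the elementary Lemma just above to deduce the regulous consequence in the next theorem.

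Your sketch is a reasonable outline of how Reznick's original argument goes: the reduction $\frac{p}{q} = \frac{p\,q^{2d-1}}{q^{2d}}$ to a single positive definite numerator is correct, and the remaining task---showing that a positive definite form multiplied by a suitable power of $r = x_1^2+\cdots+x_n^2$ becomes a sum of $2d$th powers of forms---is precisely Reznick's uniform-denominator theorem. You are right that the substantive work lies in the interior-point / compactness argument on the sphere, and you correctly flag that converting the approximation into an exact polynomial identity of the right degree is the genuine technical step. One minor point: the adjustment of $N$ to a multiple of $2d$ by ``multiplying through by $r^{2d}$ and redistributing'' is not quite as immediate as you suggest, since a product of sums of $2d$th powers is not obviously again such a sum; in Reznick's argument the exponent $N$ comes out with the right divisibility directly from the construction, so this patching is unnecessary. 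But since the present paper treats the whole theorem as a citation, none of this is expected here.
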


These two results immediately imply the following.
\begin{thm}
With the assumptions of Theorem \ref{Reznick}, the rational function $\frac{p}{q}$ can be written as a sum of $2d$-th powers of $(k-1)$-regulous functions on $\R^n$. In particular, $\frac{p}{q}$ is a sum of squares of functions of class at least $\mathcal{C}^{kd-1}$.
\end{thm}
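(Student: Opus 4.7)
The plan is to combine Theorem \ref{Reznick} with the preceding Lemma. First, apply Theorem \ref{Reznick} to write
\[
\frac{p}{q} = \sum_{i=1}^{N} f_i^{2d}, \qquad f_i = \frac{P_i}{q^{a_i}\, r^{b_i}},
\]
where $r := x_1^2 + \dots + x_n^2$, $a_i, b_i \in \N$, and $P_i \in \R[x_1,\dots,x_n]$. Since $\frac{p}{q}$ is homogeneous of degree $2kd$ and a sum of $2d$-th powers, each base $f_i$ must itself be homogeneous of degree $k$; in particular, $\deg P_i = k + 2da_i + 2b_i$.

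The core step is to verify that each $f_i$ is $(k-1)$-regulous on $\R^n$. Writing $P_i$ as an $\R$-linear combination of monomials $x^\alpha$ of degree $k + 2da_i + 2b_i$, it suffices to check the claim for each quotient $x^\alpha/(q^{a_i} r^{b_i})$. The key observation is that the denominator is itself a positive definite form, now of degree $2(da_i + b_i)$. Hence the preceding Lemma applies directly (with $q$ replaced by $q^{a_i} r^{b_i}$ and $2d$ by $2(da_i+b_i)$) and gives that each such monomial quotient is $(k + 2da_i + 2b_i) - 2(da_i+b_i) - 1 = (k-1)$-regulous. Summing, $f_i \in \RR^{k-1}(\R^n)$, which proves the first assertion.

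For the second claim, rewrite $f_i^{2d} = (f_i^d)^2$. Passing to spherical coordinates and using that $q$ and $r$ are strictly positive on the unit sphere, one has $f_i = \rho^k g_i(\theta)$ for some $g_i$ smooth on the sphere, so $f_i^d = \rho^{kd} g_i(\theta)^d$ is of class $\mathcal{C}^{kd-1}$ on $\R^n$. Hence $\frac{p}{q}$ is a sum of squares of functions of class $\mathcal{C}^{kd-1}$. The main bookkeeping point, and essentially the only mild obstacle, is recognizing that a product of positive definite forms is again a positive definite form, so that the Lemma applies to $q^{a_i} r^{b_i}$ with no modification.
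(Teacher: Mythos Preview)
Your proof is correct and follows exactly the approach the paper intends: apply Theorem~\ref{Reznick}, then feed each summand into the preceding Lemma after observing that the composite denominator $q^{a_i}r^{b_i}$ is again a positive definite form. One small notational slip: you implicitly take $\deg q = 2d$, but in Theorem~\ref{Reznick} the symbol $2d$ is the exponent, not the degree of $q$; however, since the regulous order comes out to $\deg x^{\alpha} - \deg(q^{a_i}r^{b_i}) - 1 = k-1$ regardless of $\deg q$, the argument is unaffected.
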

\begin{proof}

By Theorem \ref{Reznick}, we obtain a presentation $\frac{p}{q}=\sum_{i=1}^N f_i^{2d}$, for some positive integer $N$. For every $i=1,2,\dots, N$, $f_i$ is a homogeneous rational function of degree $k$ with a positive definite denominator. Hence, by Lemma \ref{lemma}, each $f_i$ is at least of class $\mathcal{C}^{k-1}$.

\end{proof}

\begin{remark}
In \cite{Kucharz 2} Kucharz introduced the notion of a $k$-regulous Nash function. A straightforward computation shows that the results of this paper also hold for $k$-regulous Nash functions on $\R^n$.
\end{remark}

In their paper \cite{Laraki}, Laraki and Lasserre considered a certain type of an optimization problem for rational functions on a basic semialgebraic set $K$, and its application to various finite games, such as standard static games, Loomis games and finite absorbing games. While they require the denominators of these rational functions to be strictly positive on $K$, we believe that studying sums of squares of $k$-regulous functions can lead to further developments in optimization and game theory.

\section{Acknowledgment}
The authors are grateful to our dear friend Maciej Zieliński for fruitful discussions. We thank the anonymous referees for their useful comments and remarks, which greatly improved the quality of this paper.


\begin{small}
\noindent
Juliusz Banecki

\noindent
Institute of Mathematics

\noindent
Faculty of Mathematics and Computer Science

\noindent
Jagiellonian University

\noindent
ul. Łojasiewicza 6, 30-348 Kraków, Poland

\noindent
e-mail: juliusz.banecki@autonomik.pl 

\end{small}

\vspace{6pt}

\begin{small}
\noindent
Tomasz Kowalczyk

\noindent
Institute of Mathematics

\noindent
Faculty of Mathematics and Computer Science

\noindent
Jagiellonian University

\noindent
ul. Łojasiewicza 6, 30-348 Kraków, Poland

\noindent
e-mail: tomek.kowalczyk@uj.edu.pl

\end{small}

\end{document}